\newtheorem{thm}{Theorem}[section]
\newtheorem{lem}[thm]{Lemma}
\title{Matching Book Embedding of the Cartesian Product of a Complete Graph and a Cycle}
\author { Zeling Shao, Yanqing Liu, Zhiguo Li{$^*$}\\
{\small School of Science, Hebei University of Technology, Tianjin 300401, China}
\date{}
\footnote{Corresponding author. E-mail: zhiguolee@hebut.edu.cn}
\footnote{This work is supported by the Natural Science Foundation of Hebei Province (No. A2019402043), China. }
}
\begin{document}
\baselineskip 0.65cm

\maketitle

\begin{abstract}
  The \emph{book embedding} of a graph $G$ is to place the vertices of $G$ on the spine and draw the edges to the pages so that the edges in the same page do not cross with each other. The book embedding is \emph{matching}
  if the pages have maximum degree $1$. The \emph{matching book thickness} is the minimum number of pages in which graphs can be matching book embedded.
  In this paper,  we show that   the matching book thickness of the Cartesian product $K_p\Box C_q$of a complete graph $K_p$ and a cycle $C_q$ is equal to $\Delta(K_p\Box C_q)+1$.

\bigskip
\noindent\textbf{Keywords:} Book embedding; Matching book thickness; Cartesian product; Complete graph; Cycle

\noindent\textbf{2000 MR Subject Classification.} 05C10
\end{abstract}

\section{Introduction}

The concept of a book-embedding of a graph was introduced by Ollmann and Kainen.
A $book$ of the book embedding consists of a $spine$ which is just a line
and the $pages$ each of which is a half-plane with spine as boundary.
The $book~ thickness$   $bt(G)$  is a measure of the quality of a book embedding which is the minimum number of pages in which $G$ can be embedded.   Book embedding has been found wide applications in the computer science, VLSI theory, multilayer printed circuit boards, sorting with parallel stacks and turning-machine graphs (see $[1,2]$).
The Cartesian product of two arbitrary graphs $G$ and $B$ is the graph denoted by $G\Box B$ whose vertex set is $V(G)\times V(B)$, the vertex $(u_1, v_1)$ and the vertex $(u_{2}, v_{2})$ are adjacent in $G\Box B$ if and only if $u_1= u_2$ and $v_1$ is adjacent to $v_2$ in $B$, or $v_1= v_2$ and $u_1$ is adjacent to $u_2$ in $G$ (see $[3]$).
 The book embedding of graphs has been discussed for a variety of graph families, for examples
on complete graphs[4], complete bipartite graphs[5], generalized Petersen graph[6], some Cartesian product graphs[1,7,8], some semistrong product graphs[9] and Schrijver graphs[10], etc.

A book embedding of graph $G$ is $matching$ if every vertex must have degree at most one on every page.
The $matching~ book~ thickness$ of graph $G$, denoted by $mbt(G)$, is defined analogously to the book thickness as the minimum number of pages required by any matching book embedding of graph $G$ (see $[11-13]$).
A graph $G$ is $dispersable$ if  $mbt(G)=\Delta (G)$,
where $\Delta(G)$ is the maximum degree index (see $[4,11,13]$). 
 Complete bipartite graphs $K_{n,n}~(n\geq 1)$, even cycles $C_{2m}~(m\geq 2)$, cubes $Q_d~(d\geq 0)$ and trees are dispersable [4].  Given an arbitrary graph $G=(V,E)$, let $\chi^{'}(G)$ is the edge chromatic number,
by definition, it is easy to know that $mbt (G)\geq \chi^{'}(G)\geq \Delta (G)$.
Kainen $[14]$ showed that $mbt(C_{p}\Box C_{q})$ is $4$, when $p, q$ are both even and $mbt(C_{p}\Box C_{q})$ is $5$, when $p$ is even and $q$ is odd. But for the case  $p, q$ are both odd, $mbt(C_{p}\Box C_{q})$ is still undetermined.

 S. Overbay $[11]$ showed that any $k$-regular dispersable graph $G$ is bipartite. Also, she considered the matching book embedding of complete graphs $K_n$ and got $mbt(K_n)=n$.
Back in 1979, Bernhart and Kainen conjectured that any $k$-regular bipartite graph $G$ is dispersable, i.e., $mbt(G) = k$.
J.M. Alam, M.A. Bekos, M. Gronemann, M. Kaufmann, and S. Pupyrev $[13]$ disprove this conjecture for the cases $k = 3$, and
$k = 4$. In particular, they showed that
the Gray graph, which is $3$-regular and bipartite, has dispersable book
thickness four, while the Folkman graph, which is $4$-regular and bipartite, has dispersable book thickness five.

In this paper, we compute the matching book thickness of the Cartesian product of a complete graph $K_p$ and  a cycle $C_q$ as follows.

\textbf{Main Theorem:} For $p,q\geq 3,$   $mbt(K_p\Box C_q)=\Delta(K_p\Box C_q)+1$%

\section{Proof of the main theorem}

The proof will be completed by a sequence of lemmas.

\begin{lem}$^{[11]}$
If a graph $G$ is regular and dispersable, then $G$ is bipartite.
\end{lem}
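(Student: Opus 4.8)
The plan is to exploit the fact that dispersability of a regular graph forces an extremely rigid structure on the pages, and then to read off a proper $2$-colouring directly from the linear order of the vertices along the spine.

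First I would fix notation. Suppose $G$ is $k$-regular and dispersable, so that $mbt(G)=\Delta(G)=k$. Fix a matching book embedding of $G$ using exactly $k$ pages, and let $v_1,v_2,\dots,v_n$ denote the vertices listed in the order in which they occur along the spine. Write the $k$ pages as edge sets $M_1,\dots,M_k$. Since the embedding is matching, every vertex has degree at most $1$ in each $M_i$, so summing over the $k$ pages gives total degree at most $k$; but each vertex has degree exactly $k$ in $G$, so in fact every vertex has degree exactly $1$ in every $M_i$. Hence each page $M_i$ is a \emph{perfect} matching of $G$, covering all $n$ vertices (in particular $n$ is even). This first step is where regularity together with dispersability is spent.

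The crux is a parity statement about a single page. Fix a page $M_i$; by the book-embedding condition its edges are pairwise non-crossing, i.e.\ no two chords $(v_a,v_b)$ and $(v_c,v_d)$ interleave along the spine. I claim that for every edge $v_av_b\in M_i$ with $a<b$ the difference $b-a$ is odd. Indeed, none of the vertices in positions $a+1,\dots,b-1$ can be matched to a vertex outside this block: a chord from an inside vertex to an outside one would interleave with $(v_a,v_b)$ and hence cross it. Therefore these $b-a-1$ vertices are matched entirely among themselves, forcing $b-a-1$ to be even and $b-a$ to be odd. This is the step I expect to be the main obstacle, since it is the only place the geometry of non-crossing \emph{perfect} matchings is genuinely used; note that perfectness, established in the first step, is exactly what guarantees the block $a+1,\dots,b-1$ is fully matched and thus delivers the parity conclusion.

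Finally I would colour each vertex $v_j$ black or white according to the parity of its spine position $j$, i.e.\ with parts $\{v_j : j \text{ odd}\}$ and $\{v_j : j \text{ even}\}$. By the claim, every edge of every page $M_i$ joins a vertex in an odd position to a vertex in an even position, hence joins vertices of opposite colour. Since $E(G)=M_1\cup\cdots\cup M_k$, every edge of $G$ respects this bipartition, and therefore $G$ is bipartite, as required.
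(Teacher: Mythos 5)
Your proof is correct and complete: the counting step (each of the $k$ pages must be a perfect matching), the parity claim for a non-crossing perfect matching along the spine, and the resulting $2$-colouring by position parity together give a valid argument with no gaps. Note that the paper itself offers no proof of this lemma --- it is quoted from Overbay's dissertation [11] --- and your argument is essentially the standard one from that source, so there is nothing further to reconcile.
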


\begin{lem}
Let $K_p$ be a complete graph and  $C_q$ be a cycle, $p,q\geq 3$, then $mbt(K_p\Box C_q)\geq \Delta(K_p\Box C_q)+1.$
\end{lem}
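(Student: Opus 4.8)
The plan is to derive the bound as a clean consequence of Lemma 1 together with two structural facts about $K_p\Box C_q$: that it is regular and that it is not bipartite. The general inequality $mbt(G)\geq \chi'(G)\geq \Delta(G)$ recalled in the introduction already gives $mbt(K_p\Box C_q)\geq \Delta(K_p\Box C_q)$, so the whole task reduces to ruling out equality, i.e. ruling out dispersability.

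First I would verify that $K_p\Box C_q$ is regular. Since every vertex of $K_p$ has degree $p-1$ and every vertex of $C_q$ has degree $2$, the definition of the Cartesian product gives that each vertex $(u,v)$ has degree $(p-1)+2=p+1$. Hence $K_p\Box C_q$ is $(p+1)$-regular and $\Delta(K_p\Box C_q)=p+1$. Next I would observe that $K_p\Box C_q$ contains an odd cycle: fixing any vertex $v\in V(C_q)$, the set $\{(u,v):u\in V(K_p)\}$ induces a copy of $K_p$, and because $p\geq 3$ this copy contains a triangle $K_3$, which is an odd cycle. Therefore $K_p\Box C_q$ is not bipartite.

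The two facts combine through the contrapositive of Lemma 1. Lemma 1 states that a regular dispersable graph must be bipartite; equivalently, a regular graph that is not bipartite cannot be dispersable. Since $K_p\Box C_q$ is $(p+1)$-regular and not bipartite, it is not dispersable, which by the definition of dispersability means $mbt(K_p\Box C_q)\neq \Delta(K_p\Box C_q)$. Coupled with the always-valid lower bound $mbt(K_p\Box C_q)\geq \Delta(K_p\Box C_q)$, this forces $mbt(K_p\Box C_q)\geq \Delta(K_p\Box C_q)+1$, as claimed.

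There is no substantial computational obstacle in this direction; the argument is entirely structural. The only point requiring care is making sure the hypotheses of Lemma 1 are genuinely met, namely that $K_p\Box C_q$ is regular so that the lemma applies, and that the embedded triangle really does witness non-bipartiteness for all admissible $p$. The condition $p\geq 3$ is exactly what guarantees the triangle, so I would flag that the inequality $mbt\geq\Delta+1$ rests on $p\geq 3$; the constraint $q\geq 3$ is needed only so that $C_q$ is a genuine cycle. The matching upper bound $mbt(K_p\Box C_q)\leq \Delta(K_p\Box C_q)+1$, which completes the main theorem, is the part that will require the real work and is presumably handled by explicit constructions in the subsequent lemmas.
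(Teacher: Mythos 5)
Your proposal is correct and follows essentially the same route as the paper: non-bipartiteness (via an odd cycle) plus the contrapositive of Lemma 1 rules out dispersability, and combined with $mbt\geq\Delta$ this gives the bound. In fact your write-up is slightly more careful than the paper's, since you explicitly verify the regularity hypothesis of Lemma 1 and exhibit the triangle inside a $K_p$-fiber as the odd cycle, both of which the paper leaves implicit.
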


\begin{proof}
A graph is not bipartite if and only if it contains an odd cycle. Since the Cartesian product of $K_p$ and  $C_q$ contains at least one odd cycle,  then $K_p\Box C_q$ is not bipartite. Hence the graph $K_p\Box C_q$ is not dispersable.
Therefore $mbt(K_p\Box C_q)\geq \Delta(K_p\Box C_q)+1.$
\end{proof}

We will compute the matching book thickness of $K_p\Box C_q$ from four cases.

\noindent \textbf{Case 1:}~$K_{2n+1}\Box C_{2m+1}$

\begin{lem}
If $n\geq 2, m\geq 1$, then $mbt(K_{2n+1}\Box C_{2m+1})=\Delta(K_{2n+1}\Box C_{2m+1})+1=2n+3$.
\end{lem}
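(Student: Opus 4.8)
The lower bound is already in hand: by Lemma 2, $mbt(K_{2n+1}\Box C_{2m+1})\ge \Delta+1$, and since each vertex has $2n$ neighbours inside its $K_{2n+1}$-copy and $2$ neighbours along its $C_{2m+1}$-fibre, we have $\Delta(K_{2n+1}\Box C_{2m+1})=2n+2$, so $\Delta+1=2n+3$. (One can also see $2n+3$ is forced directly: the graph is $(2n+2)$-regular on the odd number $(2n+1)(2m+1)$ of vertices, so $2n+2$ matchings cannot cover all $(n+1)(2n+1)(2m+1)$ edges, whence $\chi'=2n+3$.) The whole task is therefore to exhibit a matching book embedding on exactly $2n+3$ pages, and I would do this by an explicit layout-plus-colouring argument.

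The plan is to lay the vertices on the spine in $2m+1$ consecutive blocks $B_0,B_1,\dots,B_{2m}$, where $B_i$ carries the $i$-th copy of $K_{2n+1}$, orienting the blocks in boustrophedon fashion (columns read left-to-right in even blocks and right-to-left in odd blocks). First I would dispose of all $K_p$-edges: fix one matching book embedding of $K_{2n+1}$ on a segment using $mbt(K_{2n+1})=2n+1$ pages (Overbay), and reuse these same $2n+1$ pages inside every block. Because distinct blocks occupy disjoint spine intervals, edges of different blocks neither share endpoints nor cross, so all $\binom{2n+1}{2}(2m+1)$ complete-graph edges are accommodated on $2n+1$ pages; the essential bookkeeping is that each such page is a near-perfect matching leaving exactly one \emph{free} vertex in each block. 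Next I would handle the cycle edges joining consecutive blocks: the boustrophedon orientation makes the $2n+1$ chords between $B_i$ and $B_{i+1}$ pairwise nested, hence a non-crossing matching, and $2$-colouring the $2m$ internal block-pairs along the path $B_0B_1\cdots B_{2m}$ (even-indexed pairs one colour, odd-indexed the other) collects them onto the two remaining pages, each colour class being a disjoint union of nested matchings.

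The genuine difficulty, and the place where the extra page is spent, is the wrap-around fibre joining $B_{2m}$ back to $B_0$. Because $2m+1$ is odd, the block-cycle cannot be properly $2$-coloured, so $B_0$ and $B_{2m}$ necessarily receive the same orientation; the $2n+1$ wrap-around chords are then mutually crossing and cannot be packed onto a single page, nor onto the two pages already spent on the internal fibres. I would route them one per page through the free vertices that the $K_{2n+1}$-pages leave inside $B_0$ and $B_{2m}$, arranging the per-block embedding so that, on each page, the free vertices of $B_0$ and $B_{2m}$ lie in the same column and the wrap-around chord for that column passes over the (fully nested) middle blocks and meets no complete-graph edge. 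I expect this threading to be the main obstacle: on a segment a free vertex is un-straddled only when it occupies an even position, which yields just $n+1$ ``exposed'' pages against $2n+1$ chords to be placed, and a proper edge-colouring of $K_{2n+1}$ frees each vertex exactly once, so this count cannot be improved by re-choosing the colouring alone. The heart of the proof is thus a careful choice of the $K_{2n+1}$-embedding together with a few local exchanges on the $n$ ``bad'' pages and controlled use of the slack on the two internal-fibre pages (most cleanly organised by a short case analysis on the parities of $n$ and $m$, with $n\ge 2$ providing the room these swaps need). The budget is extremely tight — the $(n+1)(2n+1)(2m+1)$ edges leave only a handful of empty slots across all $2n+3$ pages — so essentially every free slot must be used, and the construction has to be balanced almost perfectly.
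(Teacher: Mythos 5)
Your overall architecture---boustrophedon blocks on the spine, $2n+1$ pages carrying the $K_{2n+1}$-edges, two pages for the internal cycle edges, and the $2n+1$ mutually crossing wrap-around chords threaded one per $K$-page---is exactly the architecture of the paper's construction, and your lower bound argument is fine. But there is a genuine gap precisely at the step you yourself call ``the heart of the proof'': you never actually exhibit a placement of the wrap-around chords that avoids the $K$-edges, and your own count shows the difficulty is real. Reusing one fixed matching book embedding of $K_{2n+1}$ in every block gives, as you note, only about $n+1$ pages on which the free vertex of an end block is not straddled by a $K$-edge, against $2n+1$ chords to place; the promised ``local exchanges'' and ``case analysis on the parities of $n$ and $m$'' are never carried out, so the upper bound $mbt(K_{2n+1}\Box C_{2m+1})\le 2n+3$ is not established by what you have written.

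The paper closes this gap not by exchanges but by giving the two \emph{end} blocks a page assignment different from that of the middle blocks. With the spine order $u_1v_{2n+1},\dots,u_1v_1,u_2v_1,\dots,u_{2m+1}v_{2n+1},\dots,u_{2m+1}v_1$, page $k$ (for $0\le k\le 2n$) receives the wrap-around edge $(u_1v_{k+1})(u_{2m+1}v_{k+1})$ together with, inside each end block, the matching consisting of the pairs $(v_a,v_b)$ with $a+b=2n+k+3$ and $k+1<a,b\le 2n+1$, and the pairs $(v_c,v_d)$ with $c+d=k+1$ and $1\le c,d<k+1$. This matching misses $v_{k+1}$, and---this is the key point---every one of its edges lies entirely on one side of $v_{k+1}$ in the spine order, so the long wrap-around chord at $v_{k+1}$ crosses none of them; the leftover end-block matchings with $a+b=2n+2$ are shunted onto the two cycle pages, where there is room because each of those pages serves only alternate block-pairs. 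Until you supply an explicit device of this kind (or actually perform your exchange argument), your write-up is a plan rather than a proof of the lemma.
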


\begin{proof}
Since   $\Delta(K_{2n+1}\Box C_{2m+1})=2n+2$, by Lemma  $2.2$, we have $mbt(K_{2n+1}\Box C_{2m+1})\geq 2n+3$.

There are $(2n+1)(2m+1)$ vertices for graph $K_{2n+1}\Box C_{2m+1}$, and these vertices can be separated into 
$2n+1$ rows and $2m+1$ columns. Each columns holding a consecutive number from top to bottom. 
Let the ordering on spine be as $u_1v_{2n+1},u_1v_{2n},u_1v_{2n-1},...,u_1v_1,u_2v_1,u_2v_2,...,u_2v_{2n+1},...,\\u_{2m+1}v_{2n+1},...,u_{2m+1}v_1$.

Page $0$: the edge $\{(u_1v_1),(u_{2m+1}v_1)| m\geq 1\}$, and edges $\{(u_iv_a)(u_iv_b)|i=1$ or $2m+1; a+b=2n+3; 1< a,b\leq 2n+1\}$.

Page $1$: the edge $\{(u_1v_2),(u_{2m+1}v_2)|m\geq 1\}$, and edges $\{(u_iv_a),(u_iv_b)|i=1$ or $2m+1; a+b=2n+4; 2< a,b\leq 2n+1\}$.

Page $2$: the edge $\{(u_1v_3),(u_{2m+1}v_3)| m\geq 1\}$, and edges $\{(u_iv_a),(u_iv_b);(u_iv_1),(u_iv_2)|i=1$ or $2m+1; a+b=2n+5; 3< a,b\leq 2n+1\}$.

$......$

Page $n-1$: the edge $\{(u_1v_{n}),(u_{2m+1}v_{n})|n\geq 2, m\geq 1\}$, and edges $\{(u_iv_a),(u_iv_b);(u_iv_c),(u_iv_d)|\\ i=1$ or $2m+1; a+b=3n+2; c+d=n; n< a,b\leq 2n+1; 1\leq c,d< n\}$.

Page $n$: the edge $\{(u_1v_{n+1}),(u_{2m+1}v_{n+1})|n\geq 2, m\geq 1\}$, and edges $\{ (u_iv_a),(u_iv_b);(u_iv_c),(u_iv_d)| \\i=1$ or $2m+1; a+b=3n+3; c+d=n+1; n+1< a,b\leq 2n+1; 1\leq c,d< n+1\}$.

$......$

Page $2n-2$: the edge $\{(u_1v_{2n-1}),(u_{2m+1}v_{2n-1})|n\geq 2, m\geq 1\}$, and edges $\{(u_iv_a),(u_iv_b);(u_iv_{2n}),\\
(u_iv_{2n+1})|i=1$ or $2m+1; a+b=2n-1; 1\leq a,b< 2n-1\}$.

Page $2n-1$: the edge $\{(u_1v_{2n}),(u_{2m+1}v_{2n})|n\geq 2, m\geq 1\}$, and edges $\{(u_iv_a),(u_iv_b)|i=1$ or $2m+1; a+b=2n; 1\leq a,b< 2n\}$.

Page $2n$: the edge $\{(u_1v_{2n+1}),(u_{2m+1}v_{2n+1})| n\geq 2, m\geq 1\}$, and edges $\{(u_iv_a),(u_iv_b)|i=1$ or $2m+1; a+b=2n+1; 1\leq a,b< 2n+1\}$.

Page $2n+1$: edges $\{(u_iv_j),(u_{i+1}v_j)|1 \leq i< 2m+1; j=1,2,3,...,2n+1, i$ is odd$\}$, and edges $\{(u_{2m+1}v_a),(u_{2m+1}v_b)|a+b=2n+2; 1\leq a,b\leq 2n+1\}$.

Page $2n+2$: edges $\{(u_iv_j),(u_{i+1}v_j)| 2 \leq i< 2m+1; j=1,2,3,...,2n+1, i$ is even$\}$, and edges $\{ (u_1v_a),(u_1v_b)| a+b=2n+2; 1\leq a,b\leq 2n+1\}$.

For $k=0, 1, 2, 3,..., 2n$, 
other edges $\{(u_iv_a),(u_iv_b)|i= 2, 3, ..., 2m; 1\leq a,b\leq 2n+1  \}$ of $K_{2n+1}\Box C_{2m+1}$ are placed on page $k$, where $k\equiv a+b ~~($mod$~~ 2n+1)$. 

 So   $K_{2n+1}\Box C_{2m+1}$ can be matching book embedded in  $2n+3$ pages. Therefore the matching book thickness of $K_{2n+1}\Box C_{2m+1}$ is $2n+3$ (see Fig.1 for the case $n=2, m=1$).
\end{proof}

\begin{figure}[htbp]
\centering
\includegraphics[height=4cm, width=0.7\textwidth]{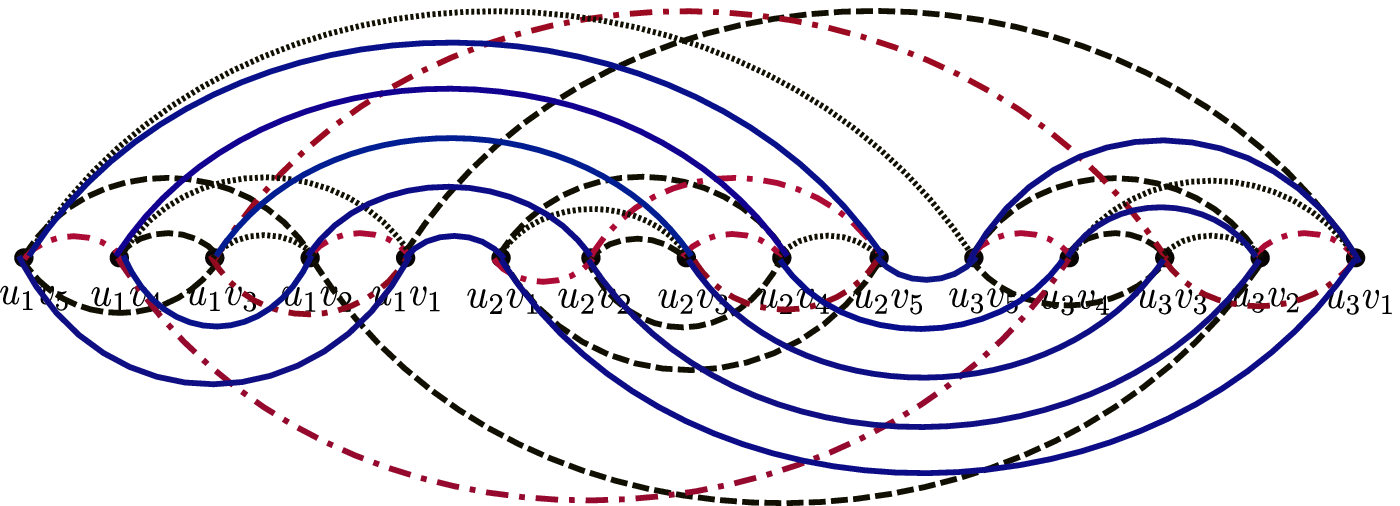}
\centerline{Fig.1  ~The matching book embedding of $K_{5}\Box C_{3}$.}%
\end{figure}

\noindent \textbf{Case 2:}~$K_{2n}\Box C_{2m+1}$

\begin{lem}
If $n\geq 2, m\geq 1$, then $mbt(K_{2n}\Box C_{2m+1})=\Delta(K_{2n}\Box C_{2m+1})+1=2n+2$.
\end{lem}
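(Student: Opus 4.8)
The plan is to prove the two inequalities separately. For the lower bound, note that $\Delta(K_{2n}\Box C_{2m+1})=(2n-1)+2=2n+1$, so Lemma 2.2 gives $mbt(K_{2n}\Box C_{2m+1})\ge 2n+2$ at once (here $2n\ge 4$, so the product contains a triangle and is non-bipartite, as that lemma requires). Everything then reduces to producing a matching book embedding on exactly $2n+2$ pages, and I would build it in close analogy with the construction of Case 1 (Lemma 2.3), adapting each step to the even complete graph $K_{2n}$.

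I would list the $2n(2m+1)$ vertices on the spine one column at a time, each column being a copy of $K_{2n}$ with vertices $u_iv_1,\dots,u_iv_{2n}$, and use the same snaking order as in Lemma 2.3 (odd columns written downward, even columns upward). The purpose of this order is that, for two neighbouring columns, the $2n$ ``vertical'' cycle edges $u_iv_j\,u_{i+1}v_j$ form a nested family, which is simultaneously a matching and non-crossing. I would then devote the first $2n$ pages to all the complete-graph edges. Inside each column these are distributed by a circular sum rule, edge $v_av_b$ going to the page indexed by $a+b \pmod{2n}$ (with a modification in the two extreme columns, discussed below); on each such page the edges of a column split into a nested low-sum block among the small-indexed vertices and a nested high-sum block among the large-indexed vertices, the two blocks being vertex-disjoint and mutually non-crossing, so the page is a non-crossing matching. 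Since distinct columns occupy disjoint arcs of the spine, they do not interfere, and this realises $mbt(K_{2n})=2n$ simultaneously in every column.

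The remaining two pages carry the cycle edges between consecutive columns. The $2m$ bundles joining columns $(1,2),(2,3),\dots,(2m,2m+1)$ form a path in the conflict graph, because two bundles clash only when they share a column; hence they can be $2$-coloured, placing the bundle between columns $i$ and $i+1$ on page $2n+1$ for $i$ odd and on page $2n+2$ for $i$ even. Non-consecutive bundles lie over disjoint arcs, so each of these two pages is again a non-crossing matching. What is left is the single wrap-around bundle $u_1v_j\,u_{2m+1}v_j$ ($j=1,\dots,2n$) joining the two extreme columns; following Lemma 2.3 I would return these $2n$ long arcs to the first $2n$ (complete-graph) pages, anchoring each one at a vertex of an extreme column that is uncovered by the complete-graph matching on that page.

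The main obstacle is exactly this last step in the even case. For $K_{2n+1}$ every colour class of the spine colouring is a near-perfect matching missing precisely one vertex, giving an obvious anchor for a wrap-around arc; but for even $K_{2n}$ a page whose column edges form a perfect matching leaves no uncovered vertex, and a parity count (a matching of $2n$ collinear points covers an even number of them) shows that only $n$ of the $2n$ complete-graph pages can expose free vertices, with two free vertices each---exactly the $2n$ anchors required. Worse, an uncovered vertex that happens to sit inside a column edge on its page cannot serve as an anchor, since the long wrap-around arc leaving it would cross that column edge. Consequently the naive $a+b\pmod{2n}$ colouring cannot be used verbatim in the two extreme columns; I would instead tune the sum ranges appearing there, page by page, as in the explicit prescription of Lemma 2.3, so that on each of the $n$ relevant pages the two free vertices are exposed (not enclosed by a column edge) and coincide with two of the wrap-around rows. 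Checking simultaneously, for every page, that it remains both a matching and crossing-free after this tuning is the delicate part of the argument; the rest is the bookkeeping already performed in the odd case.
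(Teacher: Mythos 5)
Your proposal follows the paper's route exactly: the lower bound from Lemma 2.2 via non-bipartiteness, and the upper bound by adapting the explicit page layout of Lemma 2.3 to the even complete graph. In fact you give considerably more detail than the paper does --- its entire argument for the upper bound is the single sentence that the edges ``can be put on $2n+2$ pages analogously to that of $K_{2n+1}\Box C_{2m+1}$ in Lemma 2.3,'' backed only by a figure for the case $n=3$, $m=1$. The parity obstruction you isolate (on a given page a colour class of $K_{2n}$ in a column is either a perfect matching, leaving no anchor for a wrap-around edge, or misses an even number of vertices, so only $n$ of the $2n$ complete-graph pages can absorb the $2n$ long edges between the extreme columns) is genuine and is precisely what keeps the even case from being a verbatim copy of the odd one; the paper passes over it in silence. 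The one step you leave unfinished --- exhibiting the retuned assignment in the two extreme columns and checking that every page remains a non-crossing matching --- is therefore exactly the step the paper also omits; to make the lemma airtight you would need to write that assignment out page by page, as Lemma 2.3 does for the odd case, rather than argue that suitable anchors ``can be arranged.''
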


\begin{proof}
Since   $\Delta(K_{2n}\Box C_{2m+1})=2n+1$, by Lemma $2.2$, $mbt(K_{2n}\Box C_{2m+1})\geq 2n+2$.
Vertices of graph $K_{2n}\Box C_{2m+1}$ can be separated into
$2n$ rows and $2m+1$ columns. The edges of graph $K_{2n}\Box C_{2m+1}$ can be put on  $2n+2$ pages  analogously to that of $K_{2n+1}\Box C_{2m+1}$ in Lemma $2.3$. 
Hence we have $mbt(K_{2n}\Box C_{2m+1})\leq 2n+2$.  Therefore the result is established (see Fig.2 for the case $n=3, m=1$).

\end{proof}

\begin{figure}[htbp]
\centering
\includegraphics[height=4cm, width=0.7\textwidth]{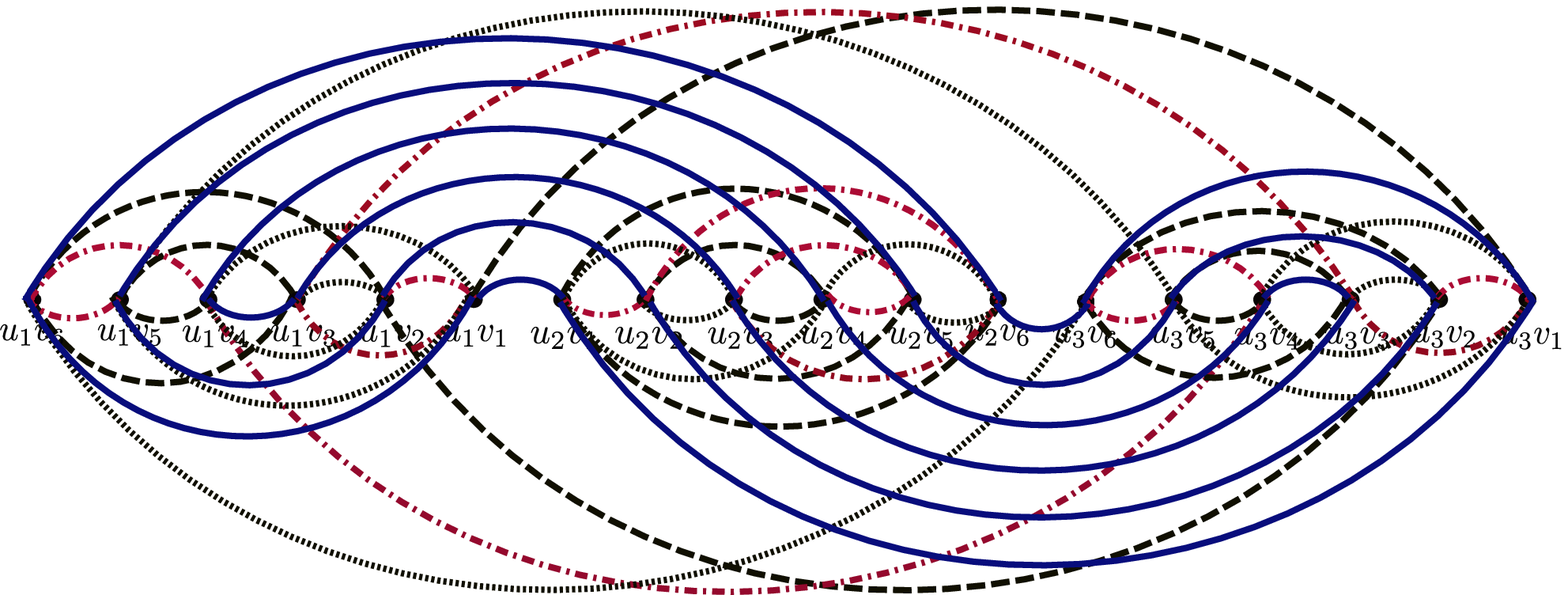}
\centerline{Fig.2 ~The matching book embedding of $K_{6}\Box C_{3}$.}
\end{figure}


 The following result will be applied to case 3 and case 4, which is a generalization of  Theorem 4.3 of  $[4]$:

\begin{lem} Let $G$ be arbitrary and $B$ be a dispersable bipartite graph, then $mbt (G\Box B)\leq mbt(G)+\Delta (B)$.
\end{lem}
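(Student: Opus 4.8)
The plan is to build a matching book embedding of $G\Box B$ directly from an optimal matching book embedding of $G$ and a dispersable (hence matching, $\Delta(B)$-page) book embedding of $B$, spending a fresh page for each page of the two factors so that the total is $mbt(G)+\Delta(B)$. First I would fix a matching book embedding of $G$ on pages $P_1,\dots,P_r$ with $r=mbt(G)$ and spine order $\sigma_G$, together with a matching book embedding of $B$ on pages $Q_1,\dots,Q_s$ with $s=\Delta(B)$ (available precisely because $B$ is dispersable) and spine order $\sigma_B$. Let $V(B)=X\cup Y$ be the bipartition.

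Next I would lay out the spine of $G\Box B$ as $|V(B)|$ consecutive \emph{blocks}, one per vertex of $B$, arranged in the order $\sigma_B$; the block of $b\in V(B)$ consists of the $|V(G)|$ vertices $(u,b)$ with $u\in V(G)$, listed in the order $\sigma_G$ if $b\in X$ and in the \emph{reversed} order $\overline{\sigma_G}$ if $b\in Y$. I would then place the two kinds of product edges on disjoint page sets: each \emph{$G$-edge} $(u_1,b)(u_2,b)$ arising from $u_1u_2\in E(G)$ goes on the copy of the $G$-page carrying $u_1u_2$, and each \emph{$B$-edge} $(u,b_i)(u,b_j)$ arising from $b_ib_j\in E(B)$ goes on the copy of the $B$-page carrying $b_ib_j$. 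This uses exactly $r+s=mbt(G)+\Delta(B)$ pages.

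The remaining work is to verify that every page is a non-crossing matching. For the $G$-edges this is immediate: distinct blocks occupy disjoint spine intervals, and inside a single block the drawing is a copy of $G$'s embedding or its mirror image (reversing the spine order preserves a book embedding), so each $G$-page stays a non-crossing matching. The matching property of the $B$-pages is also clear, since in $B$ each vertex meets at most one edge per page, so each $(u,b)$ meets at most one $B$-edge per page.

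I expect the non-crossing of the $B$-edges to be the main obstacle, and this is exactly where bipartiteness enters, so I would split it into two subcases. For two copies $(u,b_i)(u,b_j)$ and $(u',b_i)(u',b_j)$ of the \emph{same} edge $b_ib_j$, a direct computation with the block heights shows they fail to cross if and only if the $\sigma_G$-orders used in blocks $b_i$ and $b_j$ are opposite; since $b_ib_j\in E(B)$ and $B$ is bipartite, $b_i$ and $b_j$ lie in different parts, so exactly one of the two blocks is reversed and the copies become nested. For copies of two \emph{different} $B$-edges sharing a page, those edges are vertex-disjoint and non-crossing in $B$, so their block-index intervals are disjoint or nested; because every $B$-edge runs between two whole blocks, the product edges inherit the same disjoint-or-nested relation regardless of the internal heights and hence do not cross. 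Assembling these checks yields the matching book embedding on $mbt(G)+\Delta(B)$ pages. The only genuinely delicate point I anticipate is pinning down, in the first subcase, that ``opposite orders'' is both necessary and sufficient for non-crossing, which is what forces the bipartite reversal into the construction.
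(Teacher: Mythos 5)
Your proposal is correct and follows essentially the same route as the paper: blocks (copies of $G$) laid out in $B$'s spine order, with the copy reversed on one side of the bipartition so that the copies of each $B$-edge nest as concentric semicircles, and the two kinds of edges placed on disjoint sets of $mbt(G)$ and $\Delta(B)$ pages respectively. Your verification of the non-crossing conditions is in fact somewhat more careful than the paper's, but the construction is the same.
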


\begin{proof}
Since $B$ is dispersable, then there is a $\Delta(B)$-edge coloring and a corresponding matching book embedding in a $\Delta(B)$-page book so that all edges of one color lie in the same page. Since $B$ is bipartite, there is a 2-vertex coloring using colors $a$ and $b$.

Now we consider the matching book embedding of $G\Box B$. Take a matching book embedding of $G$ in $mbt(G)$ pages. Using a matching book embedding of $B$, replace all vertices which colored $a$ with a copy of this matching book embedding of $G$ and replace all vertices which colored $b$ with the same matching book embedding of $G$, but in reverse order.

Now each of vertex of $B$ is represent by a copy of $G$.
Since each of these copies are placed on the spine, then the edges of $G\Box B$ corresponding to the copies of $G$ can all be matching embedded in $mbt(G)$ pages.

The remaining edges of $G\Box B$ connect corresponding vertices in adjacent copies of $G$. Since $B$ is bipartite, then edges of $B$ connect vertices of different colors. Since the order of vertices in adjacent copies of $G$ is reversed, then edges connect two adjacent copies corresponding to a single edge of $B$ can be embedded in the appropriate page as concentric semicircles, which makes sure that the book embedding is matching. 
 Since $mbt(B)=\Delta(B)$, the copies of the edges of $B$ can also embedded in $\Delta(B)$ pages.

Hence all edges of $G\Box B$ can be matching book embedded in  $mbt(G)+\Delta (B)$ pages.
\end{proof}

For example, let $G=K_5-e$, $B=P_3$,
the matching book embedding of $G$ and $B$ is shown in Fig.$3$.
The resulting matching book embedding of graph $G\Box B$ in $7$ pages is shown in Fig.$4$. 


\begin{figure}[htbp]
\centering
\includegraphics[height=3.4cm, width=0.7\textwidth]{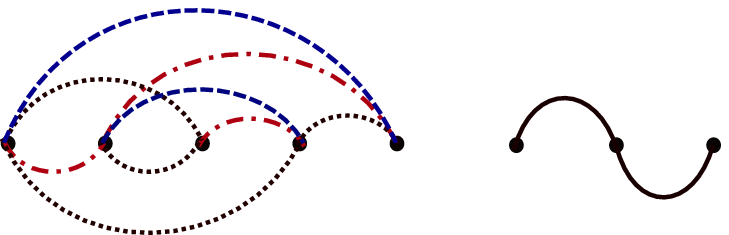}
\centerline{Fig.3 ~The matching book embedding of $G$ (left) and $B$(right)}
\end{figure}


\begin{figure}[htbp]
\centering
\includegraphics[height=4cm, width=0.7\textwidth]{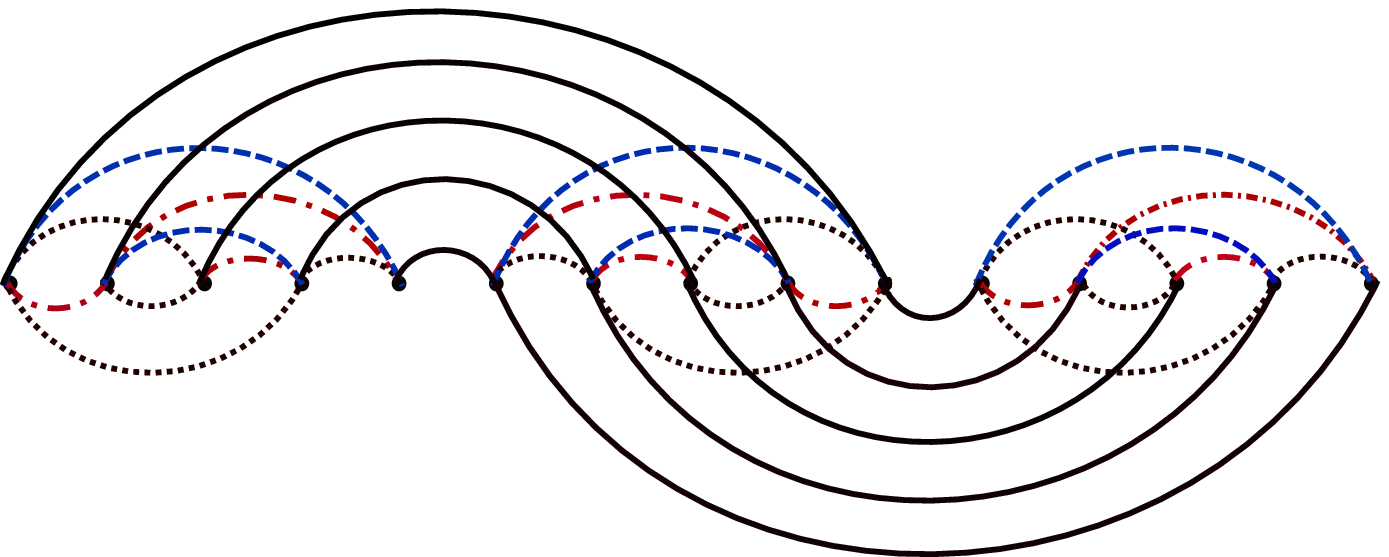}
\centerline{Fig.4 ~The matching book embedding of $G\Box B$}
\end{figure}


\noindent \textbf{Case 3 and Case 4:}~$K_{2n}\Box C_{2m}, K_{2n+1}\Box C_{2m}$
\begin{lem}
If $n, m\geq 2$, then\\
 (i) $mbt(K_{2n}\Box C_{2m})=\Delta(K_{2n}\Box C_{2m})+1=2n+2$;\\
 (ii) $mbt(K_{2n+1}\Box C_{2m})=\Delta(K_{2n+1}\Box C_{2m})+1= 2n+3$.
\end{lem}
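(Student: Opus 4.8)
The plan is to obtain both inequalities almost for free by combining the lower bound already recorded in Lemma $2.2$ with the product estimate of Lemma $2.5$, specialized to $B=C_{2m}$; no explicit page assignment (as was needed in Lemmas $2.3$ and $2.4$) should be required here. Under the hypothesis $n,m\geq 2$ we have $2n\geq 3$, $2n+1\geq 3$ and $2m\geq 3$, so Lemma $2.2$ applies directly and gives $mbt(K_p\Box C_{2m})\geq \Delta(K_p\Box C_{2m})+1$ for $p\in\{2n,2n+1\}$. A one-line degree computation then matches this lower bound to the stated value: in $K_p\Box C_q$ every vertex has degree $(p-1)+2=p+1$, so $\Delta(K_{2n}\Box C_{2m})=2n+1$ and $\Delta(K_{2n+1}\Box C_{2m})=2n+2$, yielding lower bounds of $2n+2$ and $2n+3$ respectively.

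For the matching upper bound I would invoke Lemma $2.5$ with $G=K_p$ and $B=C_{2m}$. The point to verify is that $C_{2m}$ meets the hypotheses of that lemma: it is bipartite, and it is dispersable for $m\geq 2$ (even cycles are dispersable, as noted in the introduction), with $\Delta(C_{2m})=2$. Lemma $2.5$ then gives $mbt(K_p\Box C_{2m})\leq mbt(K_p)+\Delta(C_{2m})=mbt(K_p)+2$.

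Finally I would substitute Overbay's evaluation $mbt(K_p)=p$ from $[11]$. This produces $mbt(K_{2n}\Box C_{2m})\leq 2n+2$ and $mbt(K_{2n+1}\Box C_{2m})\leq 2n+3$. Comparing these with the lower bounds of the first paragraph gives equality in both cases, establishing (i) and (ii).

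As for the main obstacle: the interesting cases in this paper are the odd-cycle ones, where an explicit ordering on the spine and a page-by-page edge distribution had to be built by hand; the even-cycle cases treated here carry no such difficulty, since all the work is funneled through Lemma $2.5$. Consequently the only genuinely delicate step is confirming that $B=C_{2m}$ is simultaneously bipartite and dispersable so that Lemma $2.5$ is applicable, after which the result follows from the known value $mbt(K_p)=p$ and elementary arithmetic on $\Delta$.
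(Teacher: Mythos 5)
Your proposal is correct and follows essentially the same route as the paper: the lower bound from Lemma 2.2 combined with the upper bound from Lemma 2.5 applied to $G=K_p$, $B=C_{2m}$, using $mbt(K_p)=p$. The only difference is that you make the use of Overbay's value $mbt(K_p)=p$ explicit, which the paper leaves implicit.
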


\begin{proof}
Since $K_{2n}\Box C_{2m}$ is $2n+1$-regular and $K_{2n+1}\Box C_{2m}$ is $2n+2$-regular, by Lemma   $2.2$, it can be obtained that $mbt(K_{2n}\Box C_{2m})\geq 2n+2$, $mbt(K_{2n+1}\Box C_{2m})\geq 2n+3$.

While the even cycle $C_{2m}$ is dispersable, by Lemma 2.5, for complete graph $K_{2n}$ and $K_{2n+1}$, we have
  $$mbt(K_{2n}\Box C_{2m})\leq mbt(K_{2n} )+2=2n+2;$$ $$mbt(K_{2n+1}\Box C_{2m})\leq mbt(K_{2n+1})+2= 2n+3.$$
This gives the upper bound of $K_{2n}\Box C_{2m}$ and $K_{2n+1}\Box C_{2m}$. Hence the equalities of (i) and (ii) both hold.
\end{proof}



\begin{thebibliography}{99}




























\bibitem{BS}
X. L. Li. Book Embedding of Graphs. Ph. D. Dissertation, ZhengZhou: ZhengZhou University, 2002.

\bibitem{ZX}
F. R. K. Chung, F. T. Leighton, A. L. Rosenberg. Embedding Graphs in Books: A Layout Problem with Applications to VLSI Design. Society for Industrial and Applied Mathematics, 1987, 8(1): 33-58.

\bibitem{BH}
J. A. Bondy, U. S. R. Murty. Graph Theory. London: Springer, 2008.

\bibitem{AB}
F. Bernhart, P. C. Kainen. The Book Thickness of a Graph. Journal of Combinatorial Theory, Series B, 1979, 27: 320-331.

\bibitem{EN}
H. Enomoto, T. Nakamigawa, and K. Ota. On the Pagenumber of Complete Bipartite Graphs. Journal of Combinatorial Theory, Series B, 1997, 71: 111-120.

\bibitem{EN}
B. Zhao, W. Xiong, Y. Tian. Embedding Generalized Petersen Graph in Books, Chin. Ann.
Math. Ser. B, 2016, 37: 385-394.


\bibitem{GH}
J. Yang, Z. L. Shao, Z. G. Li. Embedding Cartesian Product of Some Graphs in Books. Communications in Mathematical Research,2018, 34(03): 253-260.

\bibitem{GH}
Z. L. Shao, C. J. Ren, Z. G. Li. Book Embedding and Lov¡äasz Number, submitted for publication.

\bibitem{GH}
B. Zhao, Y. Tian, J. Meng. Embedding Semistrong Product of Paths and Cycles in Books. Journal of Natural Science of Hunan Normal University, 2015, 38(6): 73-77.

\bibitem{AB}
Z. L. Shao, X. L. Hao, Z. G. Li. The Book Embedding of Schrijver Graph. Utilitas Mathematica, 2018, 107: 157-165.

\bibitem{AB}
S. Overbay. Generalized Book Embeddings. Ph. D. Dissertation, Fort Collins: Colorado State University, 1998.

\bibitem{AL}
P. C. Kainen. On Book Embeddings with Degree-1 Pages, submitted for publication.

\bibitem{BK}
J. M. Alam, M. A. Bekos, M. Gronemann, M. Kaufmann, and S. Pupyrev. On Dispersable Book Embeddings. International Workshop on Graph-Theoretic Concepts in Computer Science, 2018, 11159: 1-14.

\bibitem{BM}
P. C. Kainen. Complexity of Products of Even Cycles. Bulletin of the Institute of Combinatorics and Its Applications, 2011, 62: 95-102.















\end{thebibliography}
\end{document}